\documentclass[twoside]{article}

\usepackage{amsmath,amssymb,amsfonts,amsthm,amscd}
\usepackage{mathtools}
\usepackage{mathrsfs}
\usepackage[mathscr]{eucal}

\usepackage{graphicx}
\usepackage{array,booktabs,multirow}
\usepackage{enumerate}
\usepackage{xspace}
\usepackage{caption}
\usepackage{subfig}
\usepackage{listings}
\usepackage{setspace}
\usepackage{wrapfig}

\usepackage[ruled,vlined]{algorithm2e}

\usepackage{tikz}
\usetikzlibrary{arrows,patterns}

\usepackage{hyperref}
\usepackage{fancyhdr}
\usepackage[all]{xy}
\usepackage{dsfont}

\newcommand{\salt}{\vspace*{2.5mm}}     
\newcommand{\ap}{@}

\usepackage[inner=4cm,outer=4cm]{geometry}

\def\boxit#1{\vbox{\hrule\hbox{\vrule\kern2pt
 \vbox{\kern4pt#1\kern2pt}\kern2pt\vrule}\hrule}}
\def\qed{\hfill
 $\hskip0.3cm
 \boxit{\hsize 2pt \vsize 10pt}$\bigskip\noindent}

\def\C{{\mathchoice {\setbox0=\hbox{$\displaystyle\rm C$}\hbox{\hbox
to0pt{\kern0.4\wd0\vrule height0.9\ht0\hss}\box0}}
{\setbox0=\hbox{$\textstyle\rm C$}\hbox{\hbox
to0pt{\kern0.4\wd0\vrule height0.9\ht0\hss}\box0}}
{\setbox0=\hbox{$\scriptstyle\rm C$}\hbox{\hbox
to0pt{\kern0.4\wd0\vrule height0.9\ht0\hss}\box0}}
{\setbox0=\hbox{$\scriptscriptstyle\rm C$}\hbox{\hbox
to0pt{\kern0.4\wd0\vrule height0.9\ht0\hss}\box0}}}}
\def\Q{{\mathchoice {\setbox0=\hbox{$\displaystyle\rm
Q$}\hbox{\raise
0.15\ht0\hbox to0pt{\kern0.4\wd0\vrule height0.8\ht0\hss}\box0}}
{\setbox0=\hbox{$\textstyle\rm Q$}\hbox{\raise
0.15\ht0\hbox to0pt{\kern0.4\wd0\vrule height0.8\ht0\hss}\box0}}
{\setbox0=\hbox{$\scriptstyle\rm Q$}\hbox{\raise
0.15\ht0\hbox to0pt{\kern0.4\wd0\vrule height0.7\ht0\hss}\box0}}
{\setbox0=\hbox{$\scriptscriptstyle\rm Q$}\hbox{\raise
0.15\ht0\hbox to0pt{\kern0.4\wd0\vrule height0.7\ht0\hss}\box0}}}}
\def\T{{\mathchoice {\setbox0=\hbox{$\displaystyle\rm
T$}\hbox{\hbox to0pt{\kern0.3\wd0\vrule height0.9\ht0\hss}\box0}}
{\setbox0=\hbox{$\textstyle\rm T$}\hbox{\hbox
to0pt{\kern0.3\wd0\vrule height0.9\ht0\hss}\box0}}
{\setbox0=\hbox{$\scriptstyle\rm T$}\hbox{\hbox
to0pt{\kern0.3\wd0\vrule height0.9\ht0\hss}\box0}}
{\setbox0=\hbox{$\scriptscriptstyle\rm T$}\hbox{\hbox
to0pt{\kern0.3\wd0\vrule height0.9\ht0\hss}\box0}}}}
\def\Z{{\mathchoice {\hbox{$\sf\textstyle Z\kern-0.4em Z$}}
{\hbox{$\sf\textstyle Z\kern-0.4em Z$}}
{\hbox{$\sf\scriptstyle Z\kern-0.3em Z$}}
{\hbox{$\sf\scriptscriptstyle Z\kern-0.2em Z$}}}}

\newcommand{\eqnoone}  
   {}
\newcommand{\eqnotwo}  
   {}

\newcounter{alf}

\newcommand{\adresa}[1]{\par\vspace*{-11pt}
                        \begin{flushright}
                        {\small
                        #1}
                        \end{flushright}
                        }

\newcommand{\cC}{\mathcal{C}}

\newcommand{\cP}{\mathcal{P}}

\newcommand{\cG}{\mathcal{G}}

\newtheorem{theorem}{Theorem}[section]

\newtheorem{proposition}[theorem]{Proposition}

\begin{document}


  \vskip 1.2 true cm
\setcounter{page}{1}

\begin{center} 
{\bf The Complete Intersection property for binomial ideals of collections of cells} \\

          {by}\\
          
{\sc Rodica Dinu$^{(1)}$, Francesco Navarra$^{(2)}$}

\end{center}

\markboth{Complete Intersection binomial ideals of collections of cells}{R. Dinu, F. Navarra}

\begin{center}
{\em In the memory of Ionel Bucur (1930-1976) and Nicolae Radu (1931-2001)}
\end{center}
\vspace{.1in}

\begin{abstract}
    In this paper, we provide a combinatorial characterization of those collections of cells whose inner $2$-minor ideals are complete intersections. More precisely, given a collection of cells $\cC$ and its associated inner $2$-minor ideal $I_\cC$, we prove that $I_\cC$ is a complete intersection if and only if $\cC$ is a chessboard.
\end{abstract}

\begin{quotation}
\noindent{\bf Key Words}: {Collections of cells, binomial ideals, complete intersection property.}

\noindent{\bf 2020 Mathematics Subject Classification}:  Primary
05E40, 13C40 ; Secondary 13P10.
\end{quotation}

\thispagestyle{empty}

\section{Introduction}\label{Sec1}

Let $X$ be an $m \times n$ matrix of indeterminates over a field $K$.  
For each integer $1 \le t \le \min\{m,n\}$, the ideal generated by all $t$--minors of $X$, known as a \emph{determinantal ideal}, has been extensively studied and constitutes a classical subject in Commutative Algebra and Algebraic Geometry.  
A comprehensive reference on determinantal ideals is the monograph by Bruns and Vetter~\cite{BV}.

Over the years, several generalizations of determinantal ideals have been proposed in which only specific subsets of $t$--minors are considered. 
A prominent example arises when the selected minors follow a prescribed combinatorial pattern, such as a ladder shape, leading to the notions \textit{ladder} determinantal ideals; see \cite{conca3}.  

In contrast, when $I$ is generated by an arbitrary collection of $t$--minors of $X$, the structure of the ideal becomes significantly more intricate, even in the first nontrivial case $t=2$. 
Considerable effort has therefore been devoted to investigating the algebraic properties of ideals generated by arbitrary sets of $2$--minors of $X$. 
This line of research is often framed in combinatorial terms, interpreting such ideals via graphs or collections of cells and analyzing the interplay between their algebraic and combinatorial properties. 
Notable examples include binomial edge ideals, ideals generated by adjacent $2$--minors, and inner $2$--minor ideals associated with collections of cells. A comprehensive treatment of these binomial ideals can be found in \cite{HHO_Book}.

The systematic study of inner $2$--minor ideals of collections of cells from the viewpoint of Commutative Algebra was initiated in \cite{Qureshi2012}. 
Given a collection of cells $\cC$, one associates to it the binomial ideal $I_{\cC}$ generated by its inner $2$--minors in the polynomial ring 
$S_{\cC} = K[x_a : a \in V(\cC)]$, where $V(\cC)$ denotes the vertex set of $\cC$ and $K$ is a field. 
The quotient ring $K[\cC] = S_{\cC}/I_{\cC}$ is called the coordinate ring of $\cC$.

A central problem is to determine when such an ideal is \emph{prime} or 
\emph{radical}, and to describe its primary decomposition and height in terms of 
the combinatorial structure of the underlying collection of cells. 
Significant progress in this direction has been achieved; we refer to 
\cite{CistoNavarra2021, CNU1, KoleyKotalVeer2024, MRR2020, QSS, ZGW1} for representative results.
 
More recently, a new line of research has emerged with the aim of describing 
invariants such as the Hilbert--Poincar\'e series and the 
Castelnuovo--Mumford regularity of the coordinate ring in terms of 
rook-theory invariants. Look at \cite{DinuNavarra2023, EneHerzogQureshiRomeo2021, 
JahangirNavarra2024, KV, Navarra2025, QureshiRinaldoRomeo2022, RR}, for some of the most significant contributions.

A complete bibliography on the aforementioned property, as well as on further algebraic and homological properties of these ideals, comprising around forty research articles, can be found in the recent survey \cite{Survey}.

In this paper, we investigate a property of inner $2$--minor ideals that has not 
been addressed so far, that is, the \textit{complete intersection} property.

Recall that a homogeneous ideal $I$ in $K[x_1,\dots,x_n]$ is called 
a \emph{complete intersection} if it is generated by a regular sequence of 
homogeneous polynomials. Equivalently, this means that $I$ is a 
complete intersection if and only if
\[
\mu(I) = \operatorname{ht}(I),
\]
where 
\begin{itemize}
    \item $\mu(I)$ denotes the cardinality of a minimal homogeneous generating set of $I$, and
    \item $\operatorname{ht}(I)$ denotes the height of $I$.
\end{itemize}
This numerical characterization will be adopted throughout the paper.
We refer to \cite{BH} for further background on complete intersections.

The main objective of this work is to provide a combinatorial characterization 
of those collections of cells whose inner $2$--minor ideals are complete 
intersections.

The paper is organized as follows. In Section~\ref{Section: Preliminaries}, we review the necessary background on collections of cells and their associated binomial ideals. Section~\ref{Sec1} is devoted to the proof of our main characterization theorem, namely Theorem~\ref{Thm}, which states that the inner $2$--minor ideal of a collection of cells $\cC$ is a complete intersection if and only if $\cC$ is a chessboard. Roughly speaking, a chessboard consists of cells attached vertex-to-vertex (see Figure~\ref{Figure: weakly conn. coll. of cells + chessboard}(b)). We first prove a sufficient condition in Proposition~\ref{Prop: chessboard implies CI} by constructing a suitable monomial order under which the set of generators of the inner $2$--minor ideal of a chessboard forms a reduced Gr\"obner basis. We then establish a necessary condition in Proposition~\ref{Prop: CI implies chessboard}, using the upper bound $\operatorname{ht}(I_{\cC}) \le |\cC|$ provided in Proposition~\ref{Prop: LC is a minial prime}, valid for an arbitrary collection of cells $\cC$.


\section{Collections of cells and Commutative Algebra}\label{Section: Preliminaries}

In this section, we introduce the terminology and notation concerning collections of cells and their associated ideals of $2$-minors.

Let $(i,j), (k,l) \in \mathbb{Z}^2$. We define a partial order on $\mathbb{Z}^2$ by setting $(i,j) \leq (k,l)$ if and only if $i \leq k$ and $j \leq l$. For $a = (i,j)$ and $b = (k,l)$ in $\mathbb{Z}^2$ with $a \leq b$, we define
\[
[a,b] = \{(m,n) \in \mathbb{Z}^2 \mid i \leq m \leq k,\ j \leq n \leq l\}
\]
and call it an \emph{interval} in $\mathbb{Z}^2$. If $i < k$ and $j < l$, then $[a,b]$ is said to be a \emph{proper} interval. In this case, $a$ and $b$ are called the \emph{diagonal corners} of $[a,b]$, and the points $c = (i,l)$ and $d = (k,j)$ are called the \emph{anti-diagonal corners}. If $j = l$ (respectively, $i = k$), then $a$ and $b$ are said to be in \emph{horizontal} (respectively, \emph{vertical}) position.

A proper interval $C = [a,b]$ with $b = a + (1,1)$ is called a \emph{cell} in $\mathbb{Z}^2$. The points $a$, $b$, $c$, and $d$ are referred to as the \emph{lower-left}, \emph{upper-right}, \emph{upper-left}, and \emph{lower-right} corners of $C$, respectively. We denote the sets of \emph{vertices} and \emph{edges} of $C$ by
\[
V(C) = \{a, b, c, d\}, 
\qquad 
E(C) = \{\{a,c\}, \{c,b\}, \{b,d\}, \{a,d\}\}.
\]
For a collection of cells $\cP$ in $\mathbb{Z}^2$, we set
\[
V(\cP) = \bigcup_{C \in \cP} V(C), 
\qquad 
E(\cP) = \bigcup_{C \in \cP} E(C).
\]
The \emph{rank} of $\cP$, denoted by $|\cP|$, is the number of cells in $\cP$. In literature is also used the notation: $\mathrm{rank}(\cC)$.

Let $A$ and $B$ be two cells in $\mathbb{Z}^2$ with lower-left corners $a = (i,j)$ and $b = (k,l)$, respectively, and assume that $a \leq b$. The \emph{cell interval} $[A,B]$, also called a \emph{rectangle}, is the set of all cells in $\mathbb{Z}^2$ whose lower-left corners $(r,s)$ satisfy $i \leq r \leq k$ and $j \leq s \leq l$.

Let $\cP$ be a collection of cells. An interval $[a,b]$ of $\mathbb{Z}^2$ is said to be the \emph{minimal bounding rectangle} of $\cP$ if $V(\cP) \subseteq [a,b]$ and no proper interval $[a',b']$ of $\mathbb{Z}^2$ such that $V(\cP)\subsetneq [a',b'] \subsetneq [a,b]$. 


A finite collection of cells $\cP$ is said to be \emph{weakly connected} if, for any two cells $C$ and $D$ in $\cP$, there exists a sequence of cells $\mathcal{C} \colon C = C_1, \dots, C_m = D$ in $\cP$ such that $V(C_i) \cap V(C_{i+1}) \neq \emptyset$ for all $i = 1, \dots, m-1$. See Figure \ref{Figure: weakly conn. coll. of cells + chessboard} (a) for an example.

A collection of cells is called a \emph{chessboard} if each connected component consists of cells and any two distinct cells intersect in at most one vertex. See Figure \ref{Figure: weakly conn. coll. of cells + chessboard} (b) for an example.

 \begin{figure}[h!]
     \centering
     \subfloat[]{\includegraphics[scale=0.2]{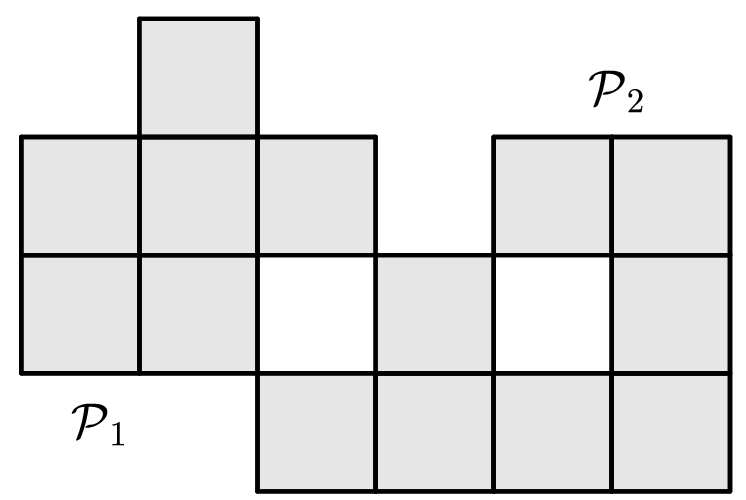}}\qquad\qquad
     \subfloat[]{\includegraphics[scale=0.2]{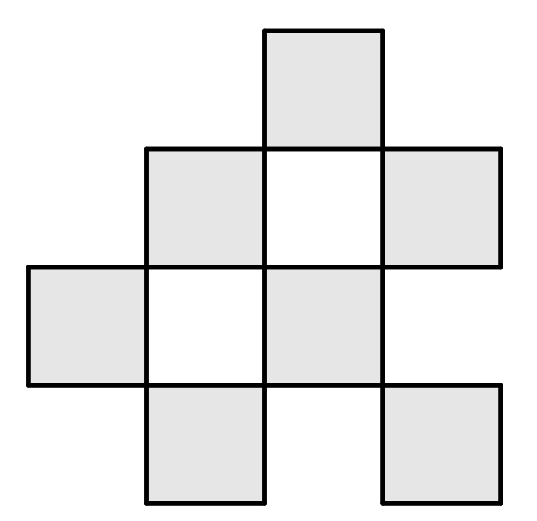}}
     \caption{A weakly connected collection of cells and a chessboard.}
\label{Figure: weakly conn. coll. of cells + chessboard}
\end{figure}

Let $\cP$ be a finite collection of cells, and let $S_\cP = K[x_v : v \in V(\cP)]$ be the polynomial ring associated with $\cP$, where $K$ is a field. A proper interval $[a,b]$ is called an \emph{inner interval} of $\cP$ if all cells in $\cP_{[a,b]}$ belong to $\cP$. If $[a,b]$ is an inner interval of $\cP$, with diagonal corners $a$ and $b$ and anti-diagonal corners $c$ and $d$, then the binomial $x_a x_b - x_c x_d$ is called an \emph{inner $2$-minor} of $\cP$. The ideal $I_{\cP} \subset S_\cP$ generated by all inner $2$-minors of $\cP$ is called the \emph{ideal of $2$-minors} of $\cP$. The quotient ring $K[\cP] = S_\cP / I_{\cP}$ is called the \emph{coordinate ring} of $\cP$. The minimal set of generators of $I_\cC$ is denoted by $\cG(I_{\cC})$.

\section{A combinatorial characterization of the Complete Intersection property}

This section is devoted to the main result of the paper, which is the following characterization.

\begin{theorem}\label{Thm}
Let $\cC$ be a collection of cells. Then $I_\cC$ is a complete intersection ideal if and only if $\cC$ is a chessboard.
\end{theorem}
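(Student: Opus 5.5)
The plan is to prove the two implications separately, following the outline in the introduction.

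\emph{Chessboard implies complete intersection.} Suppose $\cC$ is a chessboard. Any two distinct cells share at most one vertex, so no inner interval contains two cells. Hence the inner intervals of $\cC$ are exactly its cells, and
\[
I_\cC=(f_C : C\in\cC), \qquad f_C=x_{a_C}x_{b_C}-x_{c_C}x_{d_C}.
\]
These $|\cC|$ binomials are distinct quadrics with disjoint supports of monomials, so they form a minimal generating set and $\mu(I_\cC)=|\cC|$. It remains to show $\operatorname{ht}(I_\cC)=|\cC|$, and I would do this by showing the $f_C$ form a regular sequence. The strategy is to find a monomial order in which the leading terms $\mathrm{in}(f_C)$ are pairwise coprime. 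Then Buchberger's criterion makes $\{f_C\}$ a Gr\"obner basis. Moreover, $\mathrm{in}(I_\cC)$ is generated by pairwise coprime monomials, which form a regular sequence, so $\operatorname{ht}\mathrm{in}(I_\cC)=|\cC|$ and hence $\operatorname{ht}(I_\cC)=|\cC|$.

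The combinatorial step is to choose, for each cell $C$, either its diagonal pair or its anti-diagonal pair, so that the chosen pairs of different cells never share a vertex. Each vertex of $\mathbb{Z}^2$ lies in at most four cells, and in a chessboard at most two cells meet at a given vertex, and only diagonally: one lies in the NE/SW position relative to the vertex and the other in the NW/SE position. I would colour each cell by the parity of $i+j$, where $(i,j)$ is its lower-left corner. Two cells meeting at a vertex lie diagonally adjacent, so they have the same parity. Suppose cell $C$ is NE of a vertex $v$ and cell $D$ is SW of $v$. Then $v$ is the lower-left corner of $C$, a diagonal vertex of $C$, and the upper-right corner of $D$, a diagonal vertex of $D$. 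If instead the cells are NW and SE of $v$, then $v$ is anti-diagonal for both. So I choose the diagonal pair for cells with $i+j$ even and the anti-diagonal pair for cells with $i+j$ odd. Now suppose the cells at $v$ are $C$ (NE) and $D$ (SW), and they have the same parity. If the parity is even, both choose diagonal pairs and conflict. This parity rule therefore fails, and I need a different rule.

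Instead I would choose, for every cell, the diagonal leading term $x_ax_b$. A conflict then occurs only for NE/SW pairs sharing a vertex, which are common. Since a single uniform choice conflicts, I would also try the anti-diagonal choice for every cell. This conflicts only at NW/SE pairs, which also occur. The real structure is a graph $G$ on $\cC$ with an edge for each shared vertex. Each cell has at most four neighbours, one per corner. Choosing "diagonal" uses the SW and NE corners, and choosing "anti-diagonal" uses the NW and SE corners. A shared vertex $v$ is a conflict exactly when both cells choose the pair containing $v$, that is, both choose the same type (both diagonal for a NE/SW meeting, both anti-diagonal for a NW/SE meeting). So I need a $2$-labelling of $\cC$ such that no edge of type NE/SW has both ends labelled diagonal, and no edge of type NW/SE has both ends labelled anti-diagonal. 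This is a $2$-SAT-like problem. It may fail for cycles, for example four cells arranged around a square hole.

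For that reason I would probably avoid the coprimality requirement. I would use a weaker condition: a lex or reverse-lex order in which each leading term contains a variable occurring in no other leading term. I would try to build this order inductively by removing a cell with an extremal (say lowest, then leftmost) lower-left corner. This step, constructing the order in Proposition~\ref{Prop: chessboard implies CI}, is the main obstacle. Even without the order, the fallback is to show that $(f_C)$ is a regular sequence by induction. Order the cells so that each new cell $C$ has a corner vertex not used by earlier cells; the top-right corner of a maximal cell works. Then $f_C$ involves a variable absent from the earlier generators, and it is linear in that variable with a coefficient that is a variable, hence a nonzerodivisor on the earlier quotient. Checking this for the earlier ideal requires that the earlier quotient be a domain, or at least that $x_{a_C}$ be a nonzerodivisor on it. I expect the main work to be arguing that $x_{a_C}$ is regular modulo the earlier $f$'s.

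\emph{Complete intersection implies chessboard.} Suppose $\cC$ is not a chessboard. Then two cells share an edge, so some $2$-cell rectangle is an inner interval. In $\cG(I_\cC)$ every inner interval gives a distinct minimal generator, because the binomials are distinct quadrics and $I_\cC$ has no linear forms. So $\mu(I_\cC)\ge$ the number of inner intervals $>|\cC|$. By Proposition~\ref{Prop: LC is a minial prime}, $\operatorname{ht}(I_\cC)\le|\cC|<\mu(I_\cC)$, so $I_\cC$ is not a complete intersection. This direction is routine.
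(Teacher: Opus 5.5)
Your necessity direction is the paper's argument: $\mu(I_\cC)>|\cC|\ge\operatorname{ht}(I_\cC)$ via Proposition~\ref{Prop: LC is a minial prime}. One small repair there: being ``distinct quadrics'' does not by itself give linear independence. What does is that each inner $2$-minor $x_ax_b-x_cx_d$ contains the monomial $x_ax_b$. This monomial determines $[a,b]$, and it cannot be the anti-diagonal monomial of another interval, since anti-diagonal corners are incomparable. So the inner $2$-minors are linearly independent.

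\textbf{The gap is in the sufficiency direction.} You never establish $\operatorname{ht}(I_\cC)\ge|\cC|$ for a chessboard. You correctly reduce this to a choice problem: pick, for each cell, its diagonal or anti-diagonal pair so that no two chosen pairs share a vertex. You also correctly note that a conflict at a shared vertex occurs only when the two cells get the same label. But you then abandon this, fearing cycles, for two fallbacks that remain unproved. In particular, the claim that $x_{a_C}$ is a nonzerodivisor modulo the earlier binomials is the crux of your induction, and it is left open.

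\textbf{The missing observation is that your conflict graph is bipartite.} Two cells of a chessboard sharing a vertex have lower-left corners differing by $(\pm1,\pm1)$, so they lie in consecutive rows. Label each cell by the parity of the row index $j$ of its lower-left corner: diagonal for odd $j$, anti-diagonal for even $j$. (Do not use the parity of $i+j$; as you found, it is constant along diagonal adjacencies.) Adjacent cells then always get different labels, so no conflict occurs. Your four cells around a hole illustrate this. With lower-left corners $(0,1),(1,0),(2,1),(1,2)$, they receive the pairs $\{(0,1),(1,2)\}$, $\{(1,1),(2,0)\}$, $\{(2,1),(3,2)\}$, $\{(1,3),(2,2)\}$, which are pairwise disjoint.

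The paper realizes exactly this labelling with a lex order. Variables increase row by row from bottom to top, left to right in even rows and right to left in odd rows. Then the largest variable of the cell with lower-left corner $(i,j)$ is $x_{(i+1,j+1)}$ if $j+1$ is even and $x_{(i,j+1)}$ if $j+1$ is odd. Alternatively, any labelling can be realized by a weight order, since the vectors $\mathbf v_C$ are linearly independent.

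Once the leading terms are pairwise coprime, the rest of your outline finishes the proof: Buchberger's criterion, coprime monomials forming a regular sequence, and $\operatorname{ht}\operatorname{in}_<(I_\cC)=\operatorname{ht}I_\cC$.
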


We begin by proving a sufficient condition for the complete intersection property, which is stated in Proposition~\ref{Prop: chessboard implies CI}. The necessary condition will be established subsequently in Proposition~\ref{Prop: CI implies chessboard}.

\begin{proposition}\label{Prop: chessboard implies CI}
Let $\cC$ be a collection of cells. If $\cC$ is a chessboard, then $I_{\cC}$ is a complete intersection ideal.
\end{proposition}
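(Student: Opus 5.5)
In a chessboard no two distinct cells share an edge. So every inner interval of $\cC$ is a single cell: an inner interval made of two or more cells would contain two adjacent cells of $\cC$ sharing an edge. The inner $2$-minors are therefore exactly the binomials $f_C = x_{a_C}x_{b_C} - x_{c_C}x_{d_C}$, one for each $C\in\cC$, where $a_C,b_C$ are the diagonal and $c_C,d_C$ the anti-diagonal corners of $C$. Distinct cells give distinct binomials, and none is a multiple of another since all are quadrics. Hence $\mu(I_\cC)=|\cC|$. It remains to show that $f_{C_1},\dots,f_{C_n}$ is a regular sequence, or equivalently that $\operatorname{ht}(I_\cC)=|\cC|$.

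\textbf{Plan.} I will find a monomial order $<$ on $S_\cC$ under which the leading monomials $\mathrm{in}_<(f_C)$, $C\in\cC$, are pairwise coprime. Once this holds, Buchberger's criterion gives that $\{f_C\}$ is a Gr\"obner basis: every S-pair of polynomials with coprime leading terms reduces to $0$. Then $\mathrm{in}_<(I_\cC)$ is generated by $|\cC|$ pairwise coprime quadratic monomials, which form a regular sequence, so $\operatorname{ht}(\mathrm{in}_<(I_\cC))=|\cC|$. Since $S/I$ and $S/\mathrm{in}(I)$ have the same Hilbert function, they have the same dimension, so $\operatorname{ht}(I_\cC)=|\cC|=\mu(I_\cC)$. (Equivalently, a sequence whose initial forms are regular is itself regular.)

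\textbf{Choosing the order.} Each vertex $v$ lies in at most four cells, one in each quadrant around $v$. Two cells sharing $v$ are either diagonally opposite at $v$ or side-by-side at $v$; the latter would share an edge, so it is excluded. So each vertex lies in at most two cells of $\cC$, and if in two, then $v$ is the upper-right corner of one and the lower-left of the other (a diagonal vertex of both), or the upper-left of one and the lower-right of the other (an anti-diagonal vertex of both).

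For a coprime choice we select for each cell either its diagonal monomial or its anti-diagonal monomial. Two selected monomials of different cells share a variable only if both cells selected the same type at a shared vertex. A single fixed type for every cell fails: for example, if every cell selects the diagonal monomial, cells meeting at a diagonal vertex clash. I would therefore realize the choice through a weight order. Give $x_v$ weight $w(v)$ with, for instance,
\[
w(i,j)=(-1)^{i}\,\varepsilon_{i,j},
\]
or more simply a lexicographic-type weight depending on the parity of $i+j$, arranged so that for each cell the leading term is the monomial avoiding its shared corners. Ties are broken by any term order.

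\textbf{Main obstacle.} The hard step is showing that a consistent choice exists. The types of shared vertices along a chain of cells can force alternating choices, and a cycle of cells touching at corners might force a contradiction. Cells meeting at $v$ along the NE--SW diagonal force one of them to take the anti-diagonal term. Cells meeting along the NW--SE diagonal force one of them to take the diagonal term. Each cell has at most four shared corners, two of each type, so each cell must satisfy a constraint at each shared corner.

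I would first try a global rule: in each cell take the diagonal term when its lower-left corner $(i,j)$ has $i$ even, and the anti-diagonal term otherwise. Check: two cells sharing a diagonal vertex have lower-left corners $(i,j)$ and $(i+1,j+1)$, so they get opposite choices, and since their common vertex is a diagonal vertex of both, at most one of them selects it. The anti-diagonal case works the same way, with corners $(i,j)$ and $(i+1,j-1)$ differing in $i$-parity.

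This rule is realized by the weight $w(x_{(i,j)})=(-1)^i\cdot 0$? Not quite, so I would instead pass to the explicit lexicographic order in which, for each cell with $i$ even, some diagonal variable exceeds both anti-diagonal variables, and vice versa. This requires checking that these local requirements are acyclic, for example via a weight depending on $(i \bmod 2)$ and $j$. That verification is where I expect the real work. If no order works directly, a fallback is to prove regularity by induction on $|\cC|$, modding out one binomial at a time and checking that the next is a nonzerodivisor.
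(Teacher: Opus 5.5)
Your strategy is the paper's. You show the only minimal generators are the cell binomials $f_C$, then look for a monomial order making the $\mathrm{in}_<(f_C)$ pairwise coprime, so that the $f_C$ form a Gr\"obner basis and $\operatorname{ht}(I_\cC)=\operatorname{ht}(\mathrm{in}_<(I_\cC))=|\cC|$. Your selection rule (take the diagonal term iff the lower-left corner $(i,j)$ has $i$ even) is also correct; the paper's order realizes the transposed rule.

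\textbf{The gap.} The proposal stops exactly at the step that carries the content: you never exhibit a monomial order whose leading terms are the selected ones, and you say yourself this is unverified. Buchberger's coprime-leading-term criterion only applies to leading terms for an actual term order, so the height computation is not established. Your alternative suggestion fails. A weight depending only on the parity of $i+j$ makes the three cells with lower-left corners $(0,0)$, $(1,1)$, $(1,-1)$ (a chessboard) all select the same type of term. The first cell then clashes with the second at $(1,1)$ if that type is diagonal, and with the third at $(1,0)$ if it is anti-diagonal.

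\textbf{How to close it.} Your worry about cycles is unfounded, because each requirement compares only the two terms of a single cell. Let $\mathbf{v}_C$ be the difference of the exponent vectors of the two terms of $f_C$. You need $w$ with $\sigma_C\langle w,\mathbf{v}_C\rangle>0$ for prescribed signs $\sigma_C$. This is always solvable because the $\mathbf{v}_C$, $C\in\cC$, are linearly independent. Adding a multiple of $(1,\dots,1)$, which is orthogonal to every $\mathbf{v}_C$, makes $w$ positive. Explicitly, after translating so all coordinates are nonnegative, $w(x_{(i,j)})=(i\bmod 2)\,j$ works. For the cell with lower-left corner $(i,j)$, the diagonal monomial exceeds the anti-diagonal one in weight by $((i+1)\bmod 2)-(i\bmod 2)$, which is $1$ for $i$ even and $-1$ for $i$ odd; refine by any term order. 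The paper instead uses the lexicographic order on a ``snake'' ordering of the vertices: rows from bottom to top, even rows increasing left to right, odd rows right to left. That order selects the diagonal term exactly when the lower-left corner has odd second coordinate.

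\textbf{A smaller point.} ``None is a multiple of another'' is not a valid reason for $\mu(I_\cC)=|\cC|$, since distinct quadrics can be linearly dependent. The right reason is that the diagonal monomial of $f_C$ occurs in no other $f_D$.
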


\begin{proof}
We may assume that $\cC$ is weakly connected. Indeed, suppose that $\cC$ decomposes into weakly connected collections of cells $\cC_1, \dots, \cC_r$, and that $V(\cC)$ is the disjoint union of the sets $V(\cC_j)$ for $j = 1, \dots, r$. Then
\[
I_{\cC} = I_{\cC_1} + \cdots + I_{\cC_r}
\]
in a polynomial ring whose variables decompose according to the disjoint union of the vertex sets. In this situation, $I_{\cC}$ is a complete intersection if and only if each $I_{\cC_j}$ is a complete intersection. Hence, it suffices to treat the case in which $\cC$ is weakly connected.

Assume therefore that $\cC$ is a weakly connected chessboard. We recall that if the initial ideal of an homogeneous ideal $I$ with respect to some monomial order is a complete intersection, then $I$ itself is a complete intersection. Hence, we aim to define a suitable monomial order $<$ on $S_\cC$ such that $\operatorname{in}_<(I_\cC)$ is a complete intersection, i.e., generated by a regular sequence; equivalently, for every pair $m_1, m_2$ in the minimal generating set of $\operatorname{in}_<(I_\cC)$, we have $\operatorname{gcd}(m_1,m_2)=1$.

Let $[a,b]$ be the minimal bounding rectangle of $\cC$. Without loss of generality, after a suitable translation, we may assume that $a = (0,0)$ and $b = (m,n)$. We define a total order $<_{V(\cC)}$ on $V(\cC)$ as follows.

For $(i,j)$ and $(k,l)$ in $V(\cC)$, we set $(i,j) <_{V(\cC)} (k,l)$ if one of the following holds:
\begin{enumerate}
    \item $j < l$; or
    \item $j = l$, $j$ is even, and $i < k$; or
    \item $j = l$, $j$ is odd, and $i > k$.
\end{enumerate}

We then let $<$ denote the lexicographic order on $S_\cC$ induced by the variable order $x_p < x_q$ whenever $p <_{V(\cC)} q$.

For instance, if we consider the chessboard $\cC'$ in Figure~\ref{Figure: weakly conn. coll. of cells + chessboard} (b), then $<$ is the lexicographic order determined by
\[
x_{20} < \dots < x_1,
\]
where the vertices $\{1,\dots,20\}$ are depicted in Figure~\ref{Figure: order chessboard} (a). In Figure~\ref{Figure: order chessboard} (b), the diagonal or anti-diagonal segments represent the leading terms of the binomial generators of $I_{\cC'}$.
 \begin{figure}[h!]
     \centering
     \subfloat[]{\includegraphics[scale=0.2]{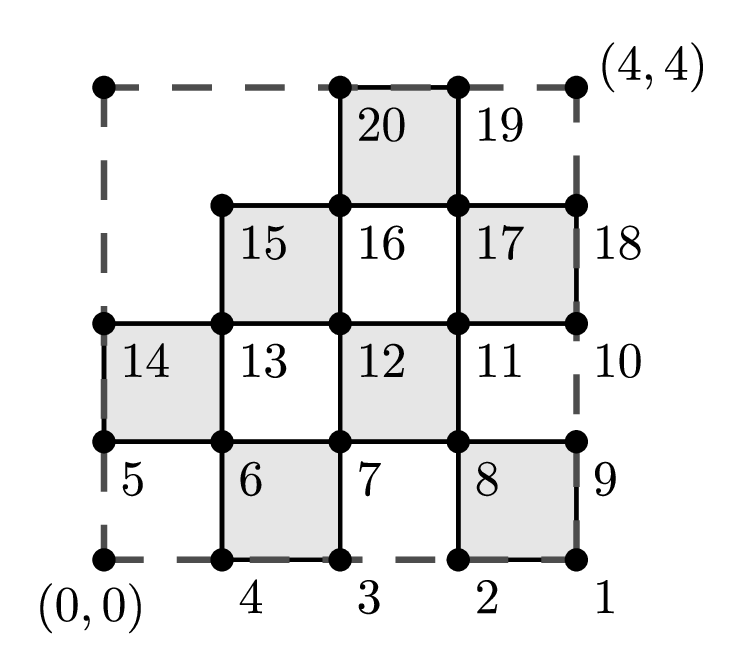}}\qquad
     \subfloat[]{\includegraphics[scale=0.2]{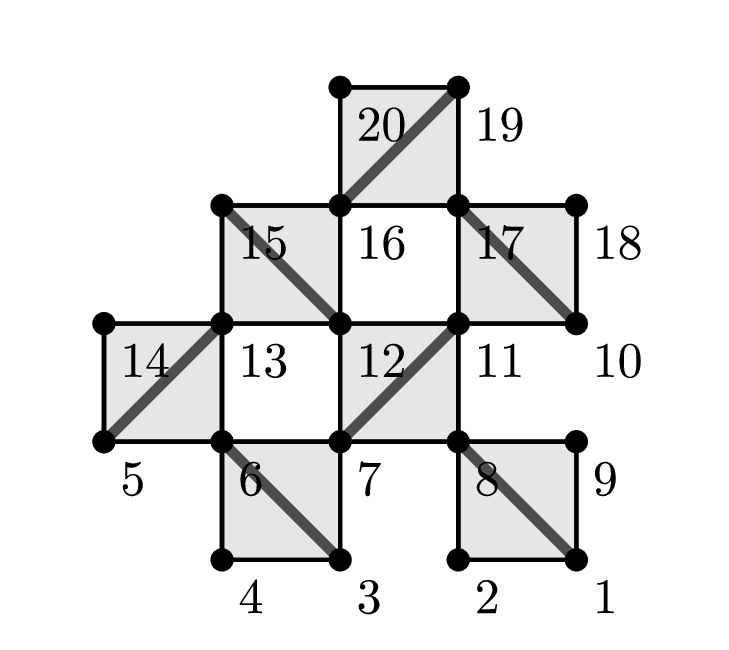}}
     \caption{Order of vertices in a chessboard.}
\label{Figure: order chessboard}
\end{figure}

In general, consider two cells in $\cC$ sharing a common vertex. There are two possible configurations.

\begin{enumerate}
    \item The cells are arranged as in Figure~\ref{Figure: order chessboard proof1}. 

       \begin{figure}[h]
     \centering
     \subfloat[]{\includegraphics[scale=0.2]{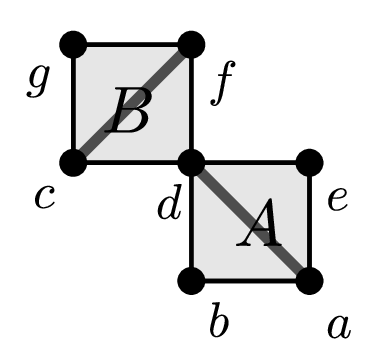}}\qquad\qquad
     \subfloat[]{\includegraphics[scale=0.2]{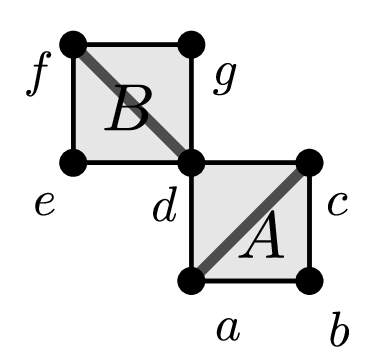}}
     \caption{Order in a part of a chessboard.}
\label{Figure: order chessboard proof1}
\end{figure}
    
    The order $<_{V(\cC)}$ is given by $g<f<e<c<b<a$, as shown in the figure. Let $h_A$ and $h_B$ be the binomials in $\cG(I_{\cC})$ corresponding to $A$ and $B$, respectively. The initial terms of $h_A$ and $h_B$ do not share any variables; hence,
    \[
    \operatorname{gcd}(\operatorname{in}_<(h_A), \operatorname{in}_<(h_B)) = 1.
    \]
 
    \item The cells are arranged as in Figure~\ref{Figure: order chessboard proof2}. This configuration is obtained from the previous one by reflection along the $y$-axis. The same argument as in case (1) applies.
    
     \begin{figure}[h]
     \centering
     \subfloat[]{\includegraphics[scale=0.2]{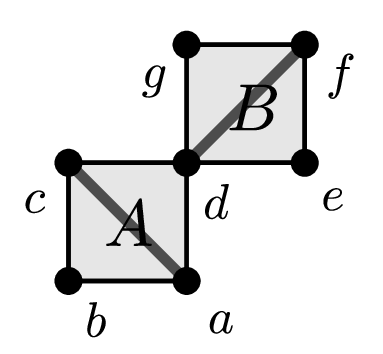}}\qquad\qquad
     \subfloat[]{\includegraphics[scale=0.2]{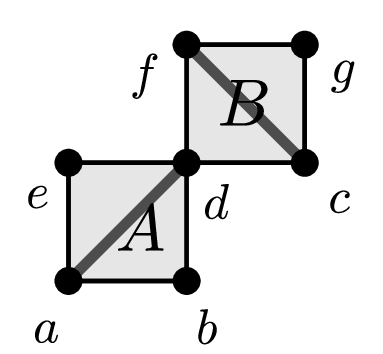}}
     \caption{Order in a part of a chessboard.}
\label{Figure: order chessboard proof2}
\end{figure}
\end{enumerate}

Therefore, for every pair $h_1, h_2 \in \cG(I_\cC)$, we have
\[
\operatorname{gcd}(\operatorname{in}_<(h_1), \operatorname{in}_<(h_2)) = 1.
\]
It follows that $\operatorname{in}_<(I_\cC)$ is a complete intersection. Consequently, $I_\cC$ is a complete intersection.
\end{proof}

Having established the sufficient condition, we now turn to the necessary condition for a collection of cells to yield a complete intersection ideal. \\

We begin by comparing the height of an inner $2$-minor ideal with the rank of the related collection of cells, in particular by proving that $\operatorname{ht}(I_\cC) \leq |\cC|$. This inequality was very recently established in \cite{HHMoradi} via a characterization of the height of a binomial ideal $I \subset S$ in terms of the dimension of the $\mathbb{Q}$-vector space spanned by the set of integer vectors $\{\textbf{v}-\textbf{w} \in \mathbb{Q}^n : \textbf{x}^\textbf{v} - \textbf{x}^\textbf{w} \in I\}$. Here, we present a short and direct proof by showing that an appropriate lattice ideal associated with $\cC$ is a minimal prime of $I_\cC$.

We begin by recalling the definition of the lattice ideal of a collection of cells, as given in \cite{Qureshi2012}. 

Let $\cC$ be a collection of cells. For each $a \in V(\cC)$, denote by $\mathbf{v}_a$ the vector in $\mathbb{Z}^{|V(\cC)|}$ having $1$ in the coordinate indexed by $a$ and $0$ in all other coordinates. If $C=[a,b] \in \cC$ is a cell with $a,b$ as its diagonal corners and $c,d$ as its anti-diagonal corners, we set $\mathbf{v}_{[a,b]} = \mathbf{v}_a + \mathbf{v}_b - \mathbf{v}_c - \mathbf{v}_d \in \mathbb{Z}^{|V(\cC)|}$. We define $\Lambda_\cC$ as the sublattice of $\mathbb{Z}^{|V(\cC)|}$ generated by the vectors $\mathbf{v}_C$ for all $C \in \cC$.  Observe that the rank of $\Lambda_\cC$ is equal to $\vert \cC\vert$.\\
Let $n = |V(\cC)|$. For $\mathbf{v} \in \mathbb{N}^n$, we denote by $x^{\mathbf{v}}$ the monomial in $S_\cC$ having $\mathbf{v}$ as its exponent vector. For $\mathbf{e} \in \mathbb{Z}^n$, we denote by $\mathbf{e}^+$ the vector obtained from $\mathbf{e}$ by replacing all negative components with zero, and by $\mathbf{e}^- = -(\mathbf{e} - \mathbf{e}^+)$ the corresponding non-positive part.  \\
Let $L_\cC$ be the lattice ideal of $\Lambda_\cC$, that is, the following binomial ideal in $S_\cC$:
\[
L_\cC = (\{x^{\mathbf{e}^+} - x^{\mathbf{e}^-} \mid \mathbf{e} \in \Lambda_\cC\}).
\]

Given a collection of cells $\cC$, the ideal $L_\cC$ is a prime ideal, see \cite[pp. 288]{Qureshi2012}. Moreover, it is shown in \cite[Theorem~3.6]{Qureshi2012} that $I_\cC$ is a prime ideal if and only if $I_\cC = L_\cC$. 

\begin{proposition}\label{Prop: LC is a minial prime}
Let $\cC$ be a collection of cells. Then $L_{\cC}$ is a minimal prime ideal of $I_{\cC}$. Moreover, $\mathrm{ht}(I_{\cC}) \leq |\cC|$.
\end{proposition}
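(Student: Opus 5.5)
The plan is to use the standard localization argument for lattice ideals: after inverting all variables, $I_\cC$ and $L_\cC$ coincide. Set $x = \prod_{a\in V(\cC)} x_a$. The first step is to show that
\[
I_\cC S_\cC[x^{-1}] = L_\cC S_\cC[x^{-1}],
\]
equivalently that $L_\cC = I_\cC : x^\infty$.

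The inclusion $I_\cC \subseteq L_\cC$ is immediate. If $[a,b]$ is an inner interval, then $\mathbf{v}_a+\mathbf{v}_b-\mathbf{v}_c-\mathbf{v}_d$ is the sum of $\mathbf{v}_C$ over the cells $C$ of $\cC_{[a,b]}$; the interior contributions telescope. Hence this vector lies in $\Lambda_\cC$, and the inner $2$-minor $x_ax_b-x_cx_d$ lies in $L_\cC$.

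For the reverse inclusion in the localization, note that in the Laurent ring the lattice ideal is generated by the binomials $x^{\mathbf{e}}-1$ with $\mathbf{e}\in\Lambda_\cC$. Every such $\mathbf{e}$ is an integer combination of the $\mathbf{v}_C$. Moreover, $x^{\mathbf{u}+\mathbf{w}}-1=x^{\mathbf{u}}(x^{\mathbf{w}}-1)+(x^{\mathbf{u}}-1)$ and $x^{-\mathbf{u}}-1=-x^{-\mathbf{u}}(x^{\mathbf{u}}-1)$. Therefore $x^{\mathbf{e}}-1$ lies in the ideal generated by the elements $x^{\mathbf{v}_C}-1=(x_ax_b-x_cx_d)/(x_cx_d)$, and these belong to $I_\cC S_\cC[x^{-1}]$. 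Contracting back gives $L_\cC = I_\cC : x^\infty$, using that $L_\cC$ is prime and contains no monomial (it is a lattice ideal; \cite{Qureshi2012}).

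The next step deduces minimality. Let $P$ be a prime with $I_\cC\subseteq P\subseteq L_\cC$. Since $L_\cC$ contains no variable, neither does $P$, so $x\notin P$. Then $L_\cC = I_\cC:x^\infty\subseteq P:x^\infty = P$, hence $P=L_\cC$. So $L_\cC$ is a minimal prime of $I_\cC$.

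For the height bound, $\operatorname{ht}(I_\cC)\le \operatorname{ht}(L_\cC)$, because the height of an ideal is the minimum height over its minimal primes. The height of a lattice ideal equals the rank of the lattice. One way to see this: the localization $S_\cC[x^{-1}]/L_\cC$ is the group algebra of $\mathbb{Z}^n/\Lambda_\cC$, and since $L_\cC$ is prime this group is torsion-free. Its Krull dimension is therefore $n-\operatorname{rank}\Lambda_\cC$, and this equals $\dim S_\cC/L_\cC$ because $L_\cC$ contains no monomial. Since the paper observes that $\operatorname{rank}\Lambda_\cC=|\cC|$, we get $\operatorname{ht}(I_\cC)\le|\cC|$.

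The main obstacle is the saturation identity $L_\cC = I_\cC : x^\infty$, specifically the bookkeeping that the cell binomials generate the lattice ideal after localization. The telescoping manipulation above should handle it cleanly. The dimension formula for lattice ideals is standard and can be cited.
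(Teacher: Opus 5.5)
Your proof is correct and follows essentially the same route as the paper. Both arguments rest on the fact that $L_\cC$ is the saturation, with respect to $\prod_{a\in V(\cC)}x_a$, of the ideal generated by the cell binomials; both then observe that a prime between $I_\cC$ and $L_\cC$ contains no variable and hence must contain $L_\cC$; and both conclude with $\operatorname{ht}(L_\cC)=\operatorname{rank}\Lambda_\cC=|\cC|$. The only difference is that you prove inline what the paper cites: the saturation identity, which the paper takes from Ho\c{s}ten--Shapiro and you derive by the telescoping argument in the Laurent ring, and the height formula, which the paper takes from Eisenbud--Sturmfels and you derive via the group algebra $K[\mathbb{Z}^n/\Lambda_\cC]$. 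You also check $I_\cC\subseteq L_\cC$ explicitly, which the paper leaves implicit.
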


\begin{proof}
Assume that there exists a prime ideal $\mathfrak{p}$ satisfying 
$I_{\cC} \subseteq \mathfrak{p} \subseteq L_{\cC}$. 
We prove that $L_{\cC} \subseteq \mathfrak{p}$.

Let $f \in L_{\cC}$. Denote by $I_{\mathrm{adj}}(\cC)$ the ideal of $S_{\cC}$ generated by all binomials of the form $x_a x_b - x_c x_d$, where $[a,b]$ is a cell of $\cC$ and $c,d$ are its anti-diagonal corners. By definition, one has 
$I_{\mathrm{adj}}(\cC) \subseteq I_{\cC}$. 
According to \cite[Proposition~1.1]{HS},
\[
L_{\cC} = I_{\mathrm{adj}}(\cC) : \left(\prod_{a \in V(\cC)} x_a\right)^{\infty}.
\]
Therefore, there exists a monomial $u \in S_{\cC}$ such that $u f \in I_{\mathrm{adj}}(\cC)$, and hence $u f \in I_{\cC}$. Since $I_{\cC} \subseteq \mathfrak{p}$, we obtain $u f \in \mathfrak{p}$.

Suppose that $f \notin \mathfrak{p}$. Because $\mathfrak{p}$ is prime and $u f \in \mathfrak{p}$, it follows that $u \in \mathfrak{p}$. The primality of $\mathfrak{p}$ then forces at least one variable dividing $u$ to lie in $\mathfrak{p}$.

On the other hand, the inclusion $\mathfrak{p} \subseteq L_{\cC}$ implies that this variable also belongs to $L_{\cC}$. This is impossible, since $L_{\cC}$ contains no variables. The contradiction shows that $f \in \mathfrak{p}$, and consequently $L_{\cC} \subseteq \mathfrak{p}$.

It is known from \cite[Theorem~2.1(b)]{ES} that, for a lattice $\Lambda$, the height of its lattice ideal $L_{\Lambda}$ coincides with the rank of $\Lambda$. In the present situation, the rank of $\Lambda_{\cC}$ equals $|\cC|$. Hence,
\[
\operatorname{ht}(I_{\cC}) \leq \operatorname{ht}(L_{\cC}) = |\cC|,
\]
as desired.
\end{proof}

It is worth underlining that the given upper bound for the height of inner $2$-minor ideals of collections of cells is sharp. It is expected that equality holds, that is, $\operatorname{ht}(I_{\cC}) = |\cC|$ (see \cite[Conjecture 3.7]{DinuNavarra2025}), and this has been proved for several classes of collections of cells (see \cite{CistoNavarraVeer2024, DinuNavarra2025, HerzogQureshiShikama2015}).

We are now in a position to prove the necessary condition: if $I_{\cC}$ is a complete intersection, then $\cC$ must be a chessboard.

\begin{proposition}\label{Prop: CI implies chessboard}
Let $\cC$ be a collection of cells. If $I_{\cC}$ is a complete intersection ideal, then $\cC$ is a chessboard.
\end{proposition}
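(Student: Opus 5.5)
We argue by contraposition. Assume $\cC$ is not a chessboard, so two distinct cells $A,B\in\cC$ share an edge. We will show that $\mu(I_\cC)>|\cC|$. By Proposition~\ref{Prop: LC is a minial prime} we have $\operatorname{ht}(I_\cC)\le|\cC|$, so this gives $\mu(I_\cC)>\operatorname{ht}(I_\cC)$, and $I_\cC$ is not a complete intersection.

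First we identify a minimal generating set. Every inner $2$-minor is a homogeneous binomial of degree $2$, and $I_\cC$ is generated in degree $2$. Hence $\mu(I_\cC)=\dim_K (I_\cC)_2$, which is the dimension of the $K$-span of all inner $2$-minors. We claim these binomials are $K$-linearly independent. The leading monomial $x_ax_b$ of the minor of an inner interval $[a,b]$ determines $[a,b]$, since $a$ is the componentwise minimum and $b$ the maximum of the two points. Likewise, the anti-diagonal monomial $x_cx_d$ determines the interval. So distinct inner intervals give binomials with pairwise distinct diagonal monomials. Each diagonal monomial $x_ax_b$ with $a<b$ strictly in both coordinates appears in exactly one generator. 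It also never appears as an anti-diagonal monomial, because for anti-diagonal corners $c=(i,l)$ and $d=(k,j)$ the points are incomparable. Any linear relation must therefore have all coefficients zero. Consequently $\mu(I_\cC)$ equals the number of inner intervals of $\cC$.

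Next we count. Each cell $C\in\cC$ is itself an inner interval, which gives $|\cC|$ distinct generators. If $A$ and $B$ share an edge, say $B=A+(1,0)$ (the vertical case is symmetric), then $A\cup B$ is a $1\times 2$ rectangle of cells. Its interval $[a,a+(2,1)]$, with $a$ the lower-left corner of $A$, is an inner interval that is not a cell. This yields at least $|\cC|+1$ inner intervals, hence $\mu(I_\cC)\ge|\cC|+1>\operatorname{ht}(I_\cC)$.

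The delicate step is the linear independence claim, since $\mu(I_\cC)$ must count every inner $2$-minor and not just some of them. The key point is the comparability of diagonal corners versus the incomparability of anti-diagonal corners, which makes the diagonal monomials a set of distinct "pivot" monomials. Alternatively, it suffices to show that the $|\cC|+1$ specific binomials above (the cell minors plus the domino minor) are part of a minimal generating set. This again follows because no degree-$2$ binomial is a $K$-combination of the others, by the same pivot argument.
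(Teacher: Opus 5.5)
Your proof is correct and follows the paper's argument: an edge shared by two cells gives the domino inner $2$-minor on top of the $|\cC|$ cell minors, so $\mu(I_\cC)\ge|\cC|+1>\operatorname{ht}(I_\cC)$ by Proposition~\ref{Prop: LC is a minial prime}. Your pivot argument (the inner $2$-minors are $K$-linearly independent because diagonal corners are comparable and anti-diagonal corners are not, and $\mu(I_\cC)=\dim_K(I_\cC)_2$) justifies the minimality of these generators, which the paper simply asserts.
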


\begin{proof}

 Suppose that $\cC$ is not a chessboard, then there exist two cells in $\cC$ sharing an edge as in the Figure \ref{fig: Two cells sharing an edge}, up to rotation. 

\begin{figure}[h!]
    \centering
    \includegraphics[scale=0.07]{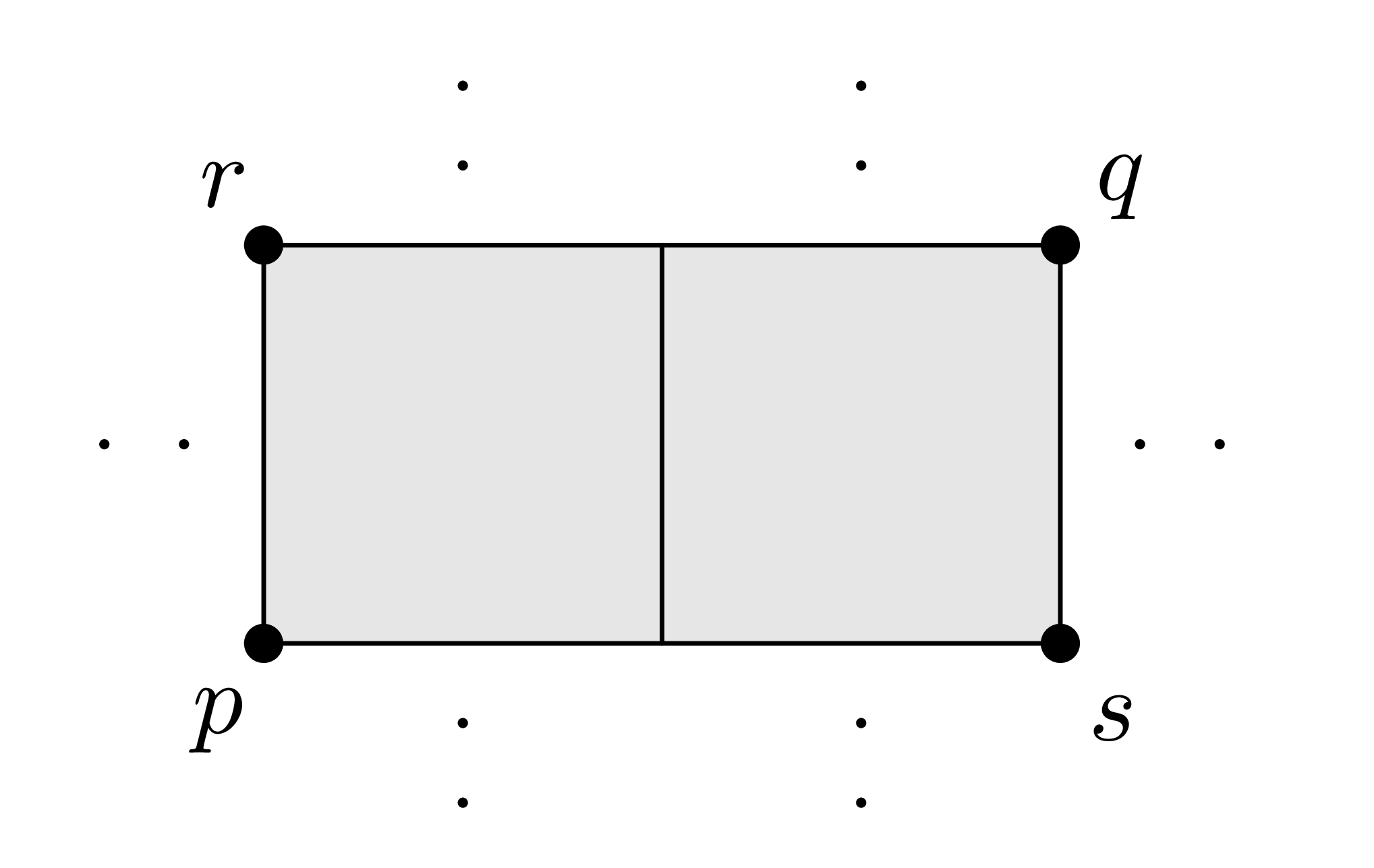}
    \caption{Two cells sharing an edge}
    \label{fig: Two cells sharing an edge}
\end{figure}

Then the number of minimal generators of $I_{\cC}$ is strictly greater than $|\cC|$, because $\cG(I_{\cC})$ contains
\[
\{x_p x_q - x_r x_s\} \cup \{x_a x_b - x_c x_d : [a,b] \text{ is a cell of } \cC \text{ with anti-diagonal corners } c,d\}.
\]
Hence $\mu(I_{\cC}) > |\cC|$. On the other hand, by Proposition~\ref{Prop: LC is a minial prime}, we have $\operatorname{ht}(I_{\cC}) \leq |\cC|$. Hence, it follows that
\[
\operatorname{ht}(I_{\cC}) < \mu(I_{\cC}),
\]
and therefore $I_{\cC}$ is not a complete intersection, which contradicts the assumption that $I_\cC$ is a complete intersection.
\end{proof}

\vskip 0,15 true cm

\noindent\textbf{Acknowledgement}\ \textit{RD is supported by a Return Humboldt fellowship and by a grant of the Ministry of Education and Research, CNCS-UEFISCDI, project number PN-IV-P2-2.1-BSM-2024-0022, within PNCDI IV. FN is supported by Scientific and Technological Research Council of Turkey T\"UB\.{I}TAK under the Grant No: 124F113, and is thankful to T\"UB\.{I}TAK for their support. FN is a member of INDAM-GNSAGA and acknowledges its support.}

\vskip 0,65 true cm

\medskip


\salt

\adresa{$^{(1)}$  Simion Stoilow Institute of Mathematics of the Romanian Academy, Calea Grivitei 21, 010702, Bucharest, Romania \\
E-mail: {\tt rdinu\ap imar.ro} }

\adresa{$^{(2)}$ Sabanci University, Faculty of Engineering and Natural Sciences, Orta Mahalle, Tuzla 34956, Istanbul, Turkey\\
E-mail: {\tt francesco.navarra\ap sabanciuniv.edu} }


\begin{thebibliography}{999}



\bibitem{conca3} {\sc A. Conca, J. Herzog}, Ladder determinantal rings have rational singularities, {\it Adv. Math.} {\bf 132} (1997), 120--147.

\bibitem{BH} {\sc W. Bruns, J. Herzog}, {\it Cohen--Macaulay Rings}, Cambridge University Press, 1993.

\bibitem{BV} {\sc W. Bruns, U. Vetter}, {\it Determinantal Rings}, Lecture Notes in Mathematics {\bf 1327}, Springer, 1988.

\bibitem{CistoNavarra2021} {\sc C. Cisto, F. Navarra}, Primality of closed path polyominoes, {\it J. Algebra Appl.} {\bf 22} (2023), 2350055.

\bibitem{CNU1} {\sc C. Cisto, F. Navarra, R. Utano}, Primality of weakly connected collections of cells and weakly closed path polyominoes, {\it Illinois J. Math.} {\bf 66} (2022), 545--563.



\bibitem{CistoNavarraVeer2024} {\sc C. Cisto, F. Navarra, D. Veer}, Polyocollection ideals and primary decomposition of polyomino ideals, {\it J. Algebra} {\bf 641} (2024), 498--529.



\bibitem{DinuNavarra2023} {\sc R. Dinu, F. Navarra}, On the rook polynomial of grid polyominoes, arXiv:2309.01818, to appear in Discrete Mathematics.

\bibitem{DinuNavarra2025} {\sc R. Dinu, F. Navarra}, Non-simple polyominoes of K\"onig type and their canonical module, {\it J. Algebra} {\bf 673} (2025), 351--384.

\bibitem{ES} {\sc D. Eisenbud, B. Sturmfels}, Binomial ideals, {\it Duke Math. J.} {\bf 84} (1996), 1--45.

\bibitem{EneHerzogQureshiRomeo2021} {\sc V. Ene, J. Herzog, A. A. Qureshi, F. Romeo}, Regularity and the Gorenstein property of L-convex polyominoes, {\it Electron. J. Comb.} {\bf 28} (2021), \#P1.50.



\bibitem{HHMoradi} {\sc J. Herzog, T. Hibi, S. Moradi}, Binomial ideals attached to finite collections of cells, {\it Commun. Algebra} (2024), \textbf{52}(11), 4666–4670.

\bibitem{HHO_Book} {\sc J. Herzog, T. Hibi, H. Ohsugi}, {\it Binomial Ideals}, Graduate Texts in Mathematics {\bf 279}, Springer, 2018.


\bibitem{HerzogQureshiShikama2015} {\sc J. Herzog, A. A. Qureshi, A. Shikama}, Gr\"obner bases of balanced polyominoes, {\it Math. Nachr.} {\bf 288} (2015), 775--783.




\bibitem{HS} {\sc S. Ho\c sten, J. Shapiro}, Primary decomposition of lattice basis ideals, {\it J. Symbolic Comput.} {\bf 29} (2000), 625--639.

\bibitem{JahangirNavarra2024} {\sc R. Jahangir, F. Navarra}, Shellable simplicial complexes and switching rook polynomials of frame polyominoes, {\it J. Pure Appl. Algebra} {\bf 228} (2024), 107576.


\bibitem{KoleyKotalVeer2024} {\sc M. Koley, N. Kotal, D. Veer}, Polyominoes and Knutson ideals, arXiv:2411.16364 (2024).

\bibitem{KV} {\sc M. Kummini, D. Veer}, The $h$-polynomial and the rook polynomial of some polyominoes, {\it Electron. J. Comb.} {\bf 30} (2023), \#P2.6.


\bibitem{MRR2020} {\sc C. Mascia, G. Rinaldo, F. Romeo}, Primality of multiply connected polyominoes, {\it Illinois J. Math.} {\bf 64} (2020), 291--304.

\bibitem{Navarra2025} {\sc F. Navarra}, Shellable flag simplicial complexes of non-simple polyominoes, {\it Boll. Unione Mat. Ital.} (2025), special collection: Second UMI Meeting of PhD Students.



\bibitem{Qureshi2012} {\sc A. A. Qureshi}, Ideals generated by $2$-minors, collections of cells and stack polyominoes, {\it J. Algebra} {\bf 357} (2012), 279--303.

\bibitem{Survey} {\sc F. Navarra, A. A. Qureshi}, {\it Recent Advances in the Theory of Polyomino Ideals}, {\it Galois J. Algebra} {\bf 1} (2025).

\bibitem{QSS} {\sc A. A. Qureshi, T. Shibuta, A. Shikama}, Simple polyominoes are prime, {\it J. Commut. Algebra} {\bf 9} (2017), 413--422.

\bibitem{QureshiRinaldoRomeo2022} {\sc A. A. Qureshi, G. Rinaldo, F. Romeo}, Hilbert series of parallelogram polyominoes, {\it Res. Math. Sci.} {\bf 9} (2022), 28.

\bibitem{RR} {\sc G. Rinaldo, F. Romeo}, Hilbert series of simple thin polyominoes, {\it J. Algebraic Comb.} {\bf 54} (2021), 607--624.

\bibitem{ZGW1} {\sc L. Zheng, J. Guo, T. Wu}, Nonsimple collections of cells whose inner $2$-minor ideals are prime II, {\it J. Algebraic Comb.} {\bf 63} (2026), 15.


\end{thebibliography}
\end{document}